\documentclass[11pt,reqno]{amsart}

\usepackage[T1]{fontenc}
\usepackage{lmodern}
\usepackage{amsmath,amssymb,mathrsfs}
\usepackage[a4paper,margin=29mm,headheight=14pt,headsep=18pt]{geometry}
\usepackage{microtype}
\usepackage[hidelinks]{hyperref}
\hypersetup{
  pdftitle={Resolutions of Compact Balanced Spaces},
  pdfsubject={Balanced metrics on resolutions and exceptional Aeppli classes},
  pdfkeywords={balanced space, resolution, Aeppli cohomology, negativity lemma}
}

\newtheorem{theorem}{Theorem}[section]
\newtheorem{proposition}[theorem]{Proposition}
\newtheorem{lemma}[theorem]{Lemma}
\newtheorem{corollary}[theorem]{Corollary}
\theoremstyle{definition}
\newtheorem{definition}[theorem]{Definition}
\theoremstyle{remark}
\newtheorem{remark}[theorem]{Remark}
\numberwithin{equation}{section}

\newcommand{\ii}{\sqrt{-1}}
\newcommand{\dd}{\mathrm d}
\newcommand{\dbar}{\bar\partial}
\newcommand{\R}{\mathbb R}
\newcommand{\C}{\mathbb C}
\newcommand{\Bclass}{\mathscr B}
\newcommand{\A}{\mathcal A}
\newcommand{\D}{\mathcal D}
\newcommand{\OO}{\mathcal O}
\DeclareMathOperator{\Supp}{Supp}
\DeclareMathOperator{\Span}{Span}

\DeclareMathOperator{\reg}{reg}
\DeclareMathOperator{\sing}{sing}

\title[Resolutions of compact balanced spaces]
{Resolutions of compact balanced spaces}

\author{Jixiang Fu}
\address{Institute of Mathematics, Fudan University, Shanghai 200433, China}
\email{majxfu@fudan.edu.cn}

\author{Wei Xia}
\address{Department of Mathematics, School of Artificial Intelligence, Taizhou University, Zhejiang 318000, China}
\email{yinfang@tzc.edu.cn}

\date{}
\subjclass[2020]{32C15, 32C10, 53C55}
\keywords{Balanced space, balanced metric, resolution of singularities, Aeppli cohomology, positive currents, negativity lemma}

\begin{document}

\begin{abstract}
Let $X$ be a compact reduced balanced complex space of pure dimension $n\geq2$. We prove that every resolution of singularities of $X$ is a balanced manifold; in particular, $X$ belongs to the class $\Bclass$. This removes the additional cohomological assumptions appearing in earlier results of Fu--Meng--Xia. The main new ingredient is the linear independence of the Aeppli classes of the exceptional prime divisors of a projective bimeromorphic morphism onto a normal complex space. This follows from the vanishing of curve degrees for Aeppli-trivial divisors and the relative negativity lemma. For non-normal spaces, strict positivity is used to eliminate divisors mapping onto codimension-one subsets before passing to the normalization. We also recall a classical surface example of Grauert showing that the inclusion of compact balanced spaces in $\Bclass$ is strict.
\end{abstract}

\maketitle
\thispagestyle{plain}

\section{Introduction}

A Hermitian metric on a complex manifold of dimension $n\geq2$ is balanced if its associated real $(1,1)$-form $\omega$ satisfies
\[
\dd\omega^{n-1}=0.
\]
Michelsohn proved that the existence of such a metric is equivalent to the existence of a smooth, $\dd$-closed, strictly positive $(n-1,n-1)$-form~\cite{Michelsohn}. Fu--Meng--Xia extended this latter condition to reduced complex spaces and called the resulting spaces \emph{balanced spaces}. They also introduced the class $\Bclass$ of compact reduced complex spaces admitting a balanced resolution~\cite{FMX}.

A natural question is therefore how balancedness of a singular complex space is related to the existence of a balanced resolution. 
\begin{theorem}\label{thm:main}
Let $X$ be a compact reduced complex space of pure dimension $n\geq2$. Suppose that $X$ admits a smooth, $\dd$-closed, strictly positive $(n-1,n-1)$-form. Then every resolution of singularities of $X$ is a balanced manifold. In particular, $X\in\Bclass$.
\end{theorem}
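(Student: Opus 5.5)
The plan is to reduce to the standard criterion for balancedness: a compact complex manifold $\tilde X$ is balanced iff it carries no nonzero positive $(1,1)$-current that is the $(1,1)$-component of a boundary, equivalently (by Hahn--Banach duality, as in Michelsohn and Alessandrini--Bassanelli) iff no nonzero positive $\partial\dbar$-closed $(1,1)$-current $T$ on $\tilde X$ has trivial Aeppli class. Let $\pi\colon\tilde X\to X$ be a resolution, and $\Omega$ the given strictly positive $\dd$-closed $(n-1,n-1)$-form on $X$. Suppose $T\geq0$ is a $(1,1)$-current on $\tilde X$ with $T=\partial\bar S+\dbar S$ (Aeppli-trivial). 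Then $\int_{\tilde X}T\wedge\pi^*\Omega=0$, since $\pi^*\Omega$ is $\dd$-closed. Because $\Omega$ is strictly positive on $X$, $\pi^*\Omega$ is strictly positive off the exceptional locus $E$. Hence $\Supp T\subset E$. By the support theorem for positive $\partial\dbar$-closed $(1,1)$-currents (a version of Siu/Bassanelli), $T=\sum_j c_j[E_j]$ with $c_j\geq0$, the $E_j$ being the divisorial components of $E$, together possibly with a component supported on lower-dimensional sets, which vanishes for bidimension reasons. So the task reduces to showing that $\sum c_j[E_j]$ Aeppli-trivial with $c_j\geq0$ forces all $c_j=0$.

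First, I will dispose of prime divisors $E_j$ whose image $\pi(E_j)$ has dimension $n-1$. Pair $T$ with $\pi^*\Omega$ again: $\int_{E_j}\pi^*\Omega=\int_{\pi(E_j)}\Omega>0$ by strict positivity on the $(n-1)$-dimensional image, so $c_j=0$ for all such $j$. (This case happens only when $X$ is non-normal.) The remaining divisors are $\pi$-exceptional in the sense that they map to codimension $\geq2$. Next, I will pass to the normalization $\nu\colon X^\nu\to X$. After possibly dominating $\tilde X$ by a further blow-up, which is harmless since Aeppli triviality pulls back and the pullback of a nonzero effective divisor is nonzero, the map factors as $\tilde X\to X^\nu$. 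I also arrange that this map is projective, using Hironaka's Chow lemma and noting that a blow-up of a balanced manifold is balanced (Alessandrini--Bassanelli), so it suffices to prove balancedness upstairs. The remaining $E_j$ are then exceptional over the normal space $X^\nu$.

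The key step, and the main obstacle, is the linear independence of the Aeppli classes of exceptional prime divisors for a projective bimeromorphic $f\colon Y\to Z$ with $Z$ normal. Suppose $D=\sum c_jE_j$ is Aeppli-trivial. For any curve $C$ contracted by $f$, I claim $D\cdot C=0$. The degree of $[D]$ on $C$ is computed by pairing with a smooth $\dd$-closed $(n-1,n-1)$-form representing $[C]$ locally over $Z$. More precisely, I will use a relatively ample line bundle to write $C$ as an intersection of hyperplane sections over a Stein neighborhood, and note that Aeppli-trivial currents pair to zero with $\dd$-closed forms. This gives $D\cdot C=0$ for all $f$-contracted curves, so $D$ and $-D$ are both $f$-nef. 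The relative negativity lemma then gives $-D\geq0$ and $D\geq0$, hence $D=0$. Applying this to $D=\sum c_jE_j$ yields $c_j=0$, so $T=0$, and $\tilde X$ is balanced. Any resolution of $X$ is therefore balanced, which gives $X\in\Bclass$.
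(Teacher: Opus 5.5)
Your overall architecture matches the paper's: Michelsohn's criterion, localization via $\langle T,\pi^*\Omega\rangle=0$, the support theorem, killing components with $(n-1)$-dimensional image by pairing with $\pi^*\Omega$, passing to $X^\nu$, and the negativity lemma. The gap is in the step you yourself single out as the key one, which does not work as sketched.

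The statement ``Aeppli-trivial currents pair to zero with $\dd$-closed forms'' holds only for $\dd$-closed \emph{test} forms. From $[D]=\partial U+\dbar V$ with currents $U,V$ on $Y$, integration by parts gives $\langle[D],\Phi\rangle=\pm\langle U,\partial\Phi\rangle\pm\langle V,\dbar\Phi\rangle$ only when $\Phi$ is compactly supported. A form such as $c_1(H,h)^{n-1}$, built from a relatively ample bundle over a Stein neighborhood $W$, lives on the non-compact set $f^{-1}(W)$. Multiplying it by a cutoff pulled back from the base destroys $\dd$-closedness. Since $Y$ need not be K\"ahler, there is in general no global $\dd$-closed $(n-1,n-1)$-form dual to $C$ to use instead. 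A second problem is that a contracted curve is in general only one component of a complete intersection of hyperplane sections. So even a legitimate pairing would compute the degree of $D$ on the whole complete-intersection $1$-cycle, not on $C$. As written, you therefore do not obtain $D\equiv_f0$, and the negativity lemma cannot be invoked.

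The missing idea is to reverse the roles: pair a \emph{smooth} Aeppli-exact form with the curve, rather than pairing the current $[D]$ with a form dual to the curve. Choose Hermitian metrics $h_j$ on $\OO_Y(E_j)$. By Poincar\'e--Lelong, $[D]$ differs from the Chern form $\theta_D=\sum_j c_j\,c_1(\OO_Y(E_j),h_j)$ by $\partial\dbar$ of a locally integrable function. Aeppli cohomology computed by forms and by currents agree, so $[D]_A=0$ gives $\theta_D=\partial\alpha+\dbar\beta$ with $\alpha,\beta$ smooth. Let $\gamma:\widehat C\to Y$ be the normalization of any compact curve. Then $\gamma^*\theta_D=\dd\,\gamma^*(\alpha+\beta)$, because the omitted $(0,2)$ and $(2,0)$ terms vanish on a Riemann surface. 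Hence $D\cdot C=\int_{\widehat C}\gamma^*\theta_D=0$ for every compact curve, with no relative ampleness or localization over $Z$ needed.

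Two smaller points:
\begin{enumerate}
\item For the reduction ``it suffices to prove balancedness upstairs'' you need the converse of what you cite, namely that a compact manifold dominated by a balanced modification is balanced. Both directions hold, and the paper's final step uses both. Your own alternative, pulling $D$ back along the Chow-lemma modification, avoids the modification theorem altogether.
\item $\int_{E_j}\pi^*\Omega$ equals $\int_{\pi(E_j)}\Omega$ only up to the degree of $E_j\to\pi(E_j)$. This does not affect positivity.
\end{enumerate}
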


In~\cite[Theorem~4.5]{FMX}, the conclusion $X\in\Bclass$ was obtained under the additional assumption that, for some resolution $M$, the natural map
\[
\iota_M:H^{1,1}_{BC}(M)\longrightarrow H^{1,1}_{A}(M)
\]
is injective. Theorem~\ref{thm:main} shows that no such global cohomological hypothesis is necessary. Only injectivity on the subspace spanned by the classes of exceptional divisors is needed.

The geometric reason for this restricted injectivity is the following. Let
\[
f:M\longrightarrow Z
\]
be a projective bimeromorphic morphism onto a normal complex space, and let $E_1,\ldots,E_r$ be its exceptional prime divisors. We prove that their Aeppli classes
\[
[E_1]_A,\ldots,[E_r]_A
\]
are linearly independent. If a real divisor
\[
D=\sum_j a_jE_j
\]
has vanishing Aeppli class, then its degree on every compact irreducible curve vanishes. In particular,
\[
D\equiv_f0.
\]
The relative negativity lemma then forces $D=0$. Thus the Bott--Chern-to-Aeppli map is automatically injective on the subspace generated by the exceptional divisor classes. 

The main additional issue for a non-normal space is that a divisor lying over the singular locus may map onto a codimension-one subset of $X$ and hence need not be exceptional. Pairing a positive current supported over the singular locus with the pullback of the balanced form shows that every component whose image has codimension one must have zero coefficient. The remaining components have images of codimension at least two; after passing to the normalization, they are exceptional divisors, so the preceding Aeppli-independence argument applies.

Theorem~\ref{thm:main} establishes the inclusion
\[
\{\text{compact balanced spaces}\}\subset\Bclass.
\]
This inclusion is strict already in complex dimension two. A classical surface contraction of Grauert~\cite{Grauert} provides such an example. Starting from a smooth projective ruled surface $\widetilde X$, one contracts a negative section to obtain a normal compact complex surface
\[
\pi:\widetilde X\longrightarrow Y
\]
which is not a K\"ahler analytic space. Since $\widetilde X$ is projective, it is K\"ahler and hence balanced. Thus $\pi$ is a balanced resolution and $Y\in\Bclass$. On the other hand, in complex dimension two every compact balanced complex space is K\"ahler. Thus $Y$ is not balanced, and therefore
\[
\{\text{compact balanced spaces}\}\subsetneq\Bclass.
\]

The paper is organized as follows. Section~\ref{sec:preliminaries} recalls smooth forms and positive currents on complex spaces, Michelsohn's characterization of balanced manifolds, and the support theorem for positive $\partial\dbar$-closed currents. Section~\ref{sec:exceptional} relates Aeppli triviality of divisor classes to the vanishing of their degrees on compact curves. Section~\ref{sec:negativity} recalls the relative negativity lemma. In Section~\ref{sec:exceptional-aeppli}, we use these results to establish the linear independence of exceptional Aeppli classes and the corresponding restricted injectivity of the Bott--Chern-to-Aeppli map. Section~\ref{sec:positivity} gives the positivity argument that detects and eliminates divisors whose images have codimension one. In Section~\ref{sec:resolution}, these ingredients are combined to prove Theorem~\ref{thm:main}, first in the irreducible case by passing to the normalization and then for general reduced spaces. Finally, Section~\ref{sec:strict-inclusion} recalls Grauert's surface construction, including the barycenter argument for non-projectivity, and uses it to show that the inclusion of compact balanced spaces in $\Bclass$ is strict.

\section{Forms and positive currents}\label{sec:preliminaries}
Let $X$ be a reduced complex space, with regular and singular loci $X_{\reg}$ and $X_{\sing}$. Given a local embedding $j:U\hookrightarrow V\subset\C^N$, smooth forms on $U$ are represented by smooth forms on $V$, where two representatives are identified if they agree after restriction to $U_{\reg}$. These local spaces define the sheaves of smooth forms on $X$. The operators $\partial$, $\dbar$, and $\dd$ are well defined, as is pullback by holomorphic maps, and pullback commutes with these operators. We use the conventions of~\cite{King}.

Let $M$ be a complex manifold. A real $(p,p)$-form on a complex manifold is \emph{weakly positive} if its restriction to every complex $p$-plane is a nonnegative top-degree form, and \emph{strictly positive} if that restriction is positive on every nonzero complex $p$-plane. A real smooth $(p,p)$-form $\Omega$ on $X$ is strictly positive if, near every point of $X$, there is a local embedding $j:U\hookrightarrow V$ and a smooth strictly positive $(p,p)$-form $\Omega_V$ on $V$ representing $\Omega|_U$.

\begin{definition}[{\cite[Definition~4.1]{FMX}}]
A reduced complex space $X$ of pure dimension $n\geq2$ is \emph{balanced} if it admits a smooth, $\dd$-closed, strictly positive $(n-1,n-1)$-form.
\end{definition}

A real $(1,1)$-current is the $(1,1)$-component of a boundary precisely when it can be written as
\[
T=\partial U+\dbar V, \qquad U\in\D'^{0,1}(M),\quad V\in\D'^{1,0}(M).
\]

\begin{theorem}[Michelsohn's criterion {\cite[Theorem~4.7]{Michelsohn}}]\label{thm:michelsohn}
A compact complex manifold $M$ of dimension $n\geq2$ is balanced if and only if there is no nonzero positive $(1,1)$-current on $M$ which is the $(1,1)$-component of a boundary.
\end{theorem}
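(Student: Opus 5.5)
This is Michelsohn's criterion, which the paper cites from \cite{Michelsohn}. The plan is the standard duality argument: the Hahn--Banach theorem applied to a closed convex cone in the Fréchet space of smooth real $(n-1,n-1)$-forms.

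\emph{Necessity.} Suppose $\omega$ is balanced, so $\Omega=\omega^{n-1}$ is a smooth, $\dd$-closed, strictly positive $(n-1,n-1)$-form. Let $T=\partial U+\dbar V$ be a positive $(1,1)$-current. Since $\Omega$ has bidegree $(n-1,n-1)$, we have $\partial\Omega=\dbar\Omega=0$ separately. Integration by parts on the compact manifold $M$ then gives
\[
T(\Omega)=\pm U(\partial\Omega)\pm V(\dbar\Omega)=0 .
\]
Because $\Omega$ is strictly positive, it dominates $c\,\omega_0^{n-1}$ for any fixed Hermitian form $\omega_0$ and some $c>0$. The pairing of a positive current with $\omega_0^{n-1}$ is its trace mass, so that mass vanishes and $T=0$. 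Finally, Michelsohn's algebraic lemma says that a strictly positive $(n-1,n-1)$-form is $\omega^{n-1}$ for a unique positive $(1,1)$-form $\omega$. Hence "balanced metric" and "closed strictly positive $(n-1,n-1)$-form" are equivalent, and this lemma is used in both directions.

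\emph{Sufficiency.} Let $E$ be the space of smooth real $(n-1,n-1)$-forms. Consider the closed subspace
\[
L=\{\Omega\in E:\ \dd\Omega=0\}
\]
and the open convex cone $C$ of strictly positive forms. Suppose $L\cap C=\emptyset$. Hahn--Banach then gives a continuous functional $T$, a real $(1,1)$-current, with $T|_L=0$ and $T>0$ on $C$.

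\begin{itemize}
\item Positivity on $C$, which is dense in the weakly positive cone, makes $T$ a positive current.
\item $T$ is nonzero, since it is positive on $\omega_0^{n-1}$.
\item The key step is to show that $T$ annihilating $L$ forces $T$ to be the $(1,1)$-component of a boundary.
\end{itemize}

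For the key step, first note that $\dd\Omega=0$ is equivalent to $\partial\Omega=0$ and $\dbar\Omega=0$. So the map
\[
\Phi:\Omega\mapsto(\partial\Omega,\dbar\Omega)\in E^{n,n-1}\oplus E^{n-1,n}
\]
has kernel exactly $L$. The functional $T$ vanishes on $\ker\Phi$, so it factors through $\operatorname{im}\Phi$. If $\operatorname{im}\Phi$ is closed, Hahn--Banach again extends the induced functional to the whole target. That extension is a pair of currents $(U,V)$ with
\[
T(\Omega)=U(\partial\Omega)+V(\dbar\Omega),
\]
that is, $T=\pm\partial U\pm\dbar V$ in the sense of distributions.

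\emph{Main obstacle.} The hard part is proving that $\operatorname{im}\Phi$ is closed. By the open mapping theorem, this is equivalent to the induced map $E/L\to\operatorname{im}\Phi$ being a topological isomorphism. I would get it from elliptic theory. The operator $\Phi$ is part of the Aeppli/Bott--Chern complex $\partial\dbar\to(\partial,\dbar)\to\dd$, which has an elliptic fourth-order Laplacian, so Hodge theory for this complex on compact $M$ gives closed range for each operator. Equivalently, one can argue with $\dd$ on the full de Rham complex: its range is closed by Hodge theory, and one then takes bidegree components. The remaining bookkeeping is to check that the real structure is preserved, so that $U$ and $V$ can be chosen with $V=\bar U$ up to sign.
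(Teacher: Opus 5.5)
The paper gives no proof of this statement; it quotes Michelsohn's theorem and uses it as a black box. Your argument is the standard Hahn--Banach duality proof, essentially Michelsohn's own, modelled on Sullivan and Harvey--Lawson, and it is correct.
\begin{itemize}
\item The easy direction follows from integration by parts together with domination of the trace measure.
\item The hard direction separates $L$ from the open cone of strictly positive forms. It then identifies the annihilator of $L=\ker\Phi$ with the currents $\partial U+\dbar V$, using closed range of $\Phi$.
\item The $(n-1)$-st root lemma translates between closed strictly positive $(n-1,n-1)$-forms and balanced metrics.
\end{itemize}

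\emph{The Bott--Chern step.} Your Bott--Chern route to closed range is the right one, and it can be made explicit. Since $\ker\Delta_{BC}+\operatorname{im}\partial\dbar\subset\ker\Phi$, any $\Omega\perp\ker\Phi$ has no harmonic part and satisfies $(\partial\dbar)^*\Omega=0$. The remaining five terms of the Kodaira--Spencer Laplacian then express $\Delta_{BC}\Omega$ as $Q(\partial\Omega,\dbar\Omega)$ for a differential operator $Q$. Hence $\Omega=G_{BC}\,Q\,\Phi\Omega$. After projecting a sequence $\Omega_k$ onto $(\ker\Phi)^\perp$ via the Bott--Chern Hodge decomposition, this gives closed range of $\Phi$ on complex forms $E_\C$. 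For real forms, use
\[
\Phi(E)=\Phi(E_\C)\cap\{(\alpha,\beta):\beta=\bar\alpha\}.
\]
This identity holds because $\Phi(\bar\Omega)=\Phi(\Omega)$ whenever $\Phi(\Omega)$ lies in that subspace.

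\emph{The de Rham alternative.} Your ``equivalent'' de Rham alternative does not work as stated. Closed range of $\dd$ on all of $\A^{2n-2}$ does not pass to its restriction to $(n-1,n-1)$-forms. Indeed,
\[
\dd\A^{2n-2}=\dd\A^{n-1,n-1}+\dbar\A^{n,n-2}+\partial\A^{n-2,n}.
\]
Taking bidegree components cannot separate off the last two summands, because they land in the same bidegrees $(n,n-1)$ and $(n-1,n)$ as $\dd\A^{n-1,n-1}$. To absorb them you would need to control $(\operatorname{im}\partial\cap\operatorname{im}\dbar)/\operatorname{im}\partial\dbar$. That is Bott--Chern information, not de Rham information, so rely on the Bott--Chern argument alone.
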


The following form of the support theorem is the case needed here.
\begin{theorem}[Alessandrini--Bassanelli {\cite[Theorem~1.5]{AB-currents}}]\label{thm:support}
Let $M$ be a complex manifold of dimension $n$, and let $A\subset M$ be a compact analytic subset of dimension at most $n-1$. If $T$ is a positive $\partial\dbar$-closed $(1,1)$-current supported on $A$, then
\[
T=\sum_{j=1}^r a_j[E_j],\qquad a_j\geq0,
\]
where $E_1,\ldots,E_r$ are the $(n-1)$-dimensional irreducible components of $A$. In particular, $T=0$ if $\dim A\leq n-2$.
\end{theorem}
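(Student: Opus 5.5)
The proof will proceed in three steps: a local structure result on the regular part of the top-dimensional components of $A$, a rigidity step that turns the local densities into constants, and a vanishing step that removes what is left over on the small part of $A$. Throughout I use that a positive current has order zero, i.e.\ its coefficients are complex measures, so that $T$ can be restricted to Borel sets and every restriction remains positive. Write $A=E_1\cup\dots\cup E_r\cup A'$, where the $E_j$ are the $(n-1)$-dimensional irreducible components and $\dim A'\le n-2$. Set
\[
S=A'\cup A_{\sing}\cup\bigcup_{i\neq j}(E_i\cap E_j),
\]
an analytic subset of dimension at most $n-2$.

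\emph{Step 1 (local structure).} Near a point $p\in E_j\setminus S$, choose coordinates in which $A=\{z_n=0\}$. A positive current of bidimension $(n-1,n-1)$ supported on a smooth hypersurface $H$ is of the form $h\,[H]$ for a positive measure $h$ on $H$. This is the first support theorem for order-zero currents (cf.\ Demailly, Ch.~III). Since $T$ has order zero and is supported in $H$, it kills $\bar z_n\varphi$ and $z_n\varphi$. Positivity, via Cauchy--Schwarz on the coefficient measures, then kills every coefficient except the one in front of $\ii\,dz_n\wedge d\bar z_n$. Finally, the requirement that $\partial\dbar T$ vanish forces that coefficient to be independent of the transversal derivatives. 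Next, $\partial\dbar T=0$ gives $[H]\wedge\ii\partial\dbar h=0$, so $h$ is a positive pluriharmonic function (not merely a measure) on $E_j\setminus S$.

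\emph{Step 2 (constancy).} Pull $h$ back to a desingularization $\nu_j:\widetilde E_j\to E_j$. The function $h$ is positive and pluriharmonic off the proper analytic subset $\nu_j^{-1}(S)$. It is locally bounded below, and the total mass of $T$ is finite, which bounds the $L^1$-norm of $h$. Together these should allow $h$ to extend as a pluriharmonic function across $\nu_j^{-1}(S)$; I would first try to prove this extension for the plurisuperharmonic function $h$ and then upgrade it to pluriharmonic. Since $\widetilde E_j$ is compact and connected, the maximum principle then makes $h\equiv a_j\ge0$ constant.

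\emph{Step 3 (vanishing on $S$).} Set $R=T-\sum_j a_j[E_j]$. By Step 1, $R$ equals the restriction $\mathbf 1_S T$, so it is positive. It is also $\partial\dbar$-closed and supported in $S$, which has dimension at most $n-2$. I must show $R=0$, and this is the main obstacle. For closed currents it is Federer's flatness theorem, but here only $\partial\dbar$-closedness is available. I would use the Alessandrini--Bassanelli monotonicity, i.e.\ the Lelong-type estimate for positive pluriharmonic currents. It gives a mass bound $\|R\|(B(x,r))\le Cr^{2n-2}$ near each point, hence $\|R\|\ll\mathcal H^{2n-2}$. Since $\mathcal H^{2n-2}(S)=0$, this forces $R=0$. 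The same argument applied directly to $A$ gives the final claim: if $\dim A\le n-2$, then $T=0$.
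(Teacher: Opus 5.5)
The paper does not prove this statement; it quotes it from Alessandrini--Bassanelli. So I judge your argument on its own. The three-step architecture is the right one, and Step~3 is sound. The Lelong--Skoda monotonicity argument can be run using only $\partial\dbar T$ paired with radial forms $g(|z-x|^2)\beta^{n-2}$, together with $T\wedge(\ii\partial\dbar\log|z-x|^2)^{n-1}\geq0$. Thus $dT$ never enters, and $r^{-2(n-1)}\|T\|(B(x,r))$ is nondecreasing for positive $\partial\dbar$-closed currents. Since $\mathcal H^{2n-2}(S)=0$, this finishes Step~3. However, Steps~1 and~2 each rest on a claim that is false as stated.

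\emph{Step~1.} Positivity and support in $H=\{z_n=0\}$ do not force $T=h[H]$. The current $\mu(z')\otimes\delta_0(z_n)\,\ii\,dz_1\wedge d\bar z_1$, with $\mu\geq0$ a measure on $H$, is positive, of order zero, supported on $H$, and not of that form. The support theorem you cite requires $dT$ to have order zero, which you do not have. Cauchy--Schwarz also cannot remove the tangential block $T_{a\bar b}$ with $a,b<n$; it is $\partial\dbar T=0$ that does this. Test it against $|z_n|^2\varphi$, with $\varphi$ a test form of bidegree $(n-2,n-2)$. Every term of $\partial\dbar(|z_n|^2\varphi)$ except $dz_n\wedge d\bar z_n\wedge\varphi$ carries a factor $z_n$, $\bar z_n$ or $|z_n|^2$, so $T$ kills it. Hence $T\wedge\ii\,dz_n\wedge d\bar z_n=0$, which gives $T_{a\bar a}=0$ for $a<n$. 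Only now does Cauchy--Schwarz kill every coefficient other than $T_{n\bar n}$. The rest of Step~1 then works as you wrote it.

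\emph{Step~2.} A lower bound and an $L^1$ bound do not give a pluriharmonic extension across $\nu_j^{-1}(S)$. Here $S\cap E_j$ typically has codimension one in $E_j$, for instance $E_i\cap E_j$. The function $-\log|w|$ on the punctured disc is positive, harmonic and integrable, yet its extension has $\ii\partial\dbar$ equal to a negative multiple of the Dirac mass at $0$. The local model $h[E]$ with $E=\{z_2=0\}$ and $h=-\log|z_1|$ satisfies all your hypotheses, so no ``upgrade'' can follow from them. Locally it is $\partial\dbar T=0$ across $S$ that excludes such poles, and Step~2 never uses it. Fortunately the upgrade is unnecessary. Since $h\geq0$ is pluriharmonic, $e^{-h}$ is plurisubharmonic with values in $(0,1]$ on $\widetilde E_j\setminus\nu_j^{-1}(S)$. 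It therefore extends plurisubharmonically across this analytic set (Grauert--Remmert). On the compact connected manifold $\widetilde E_j$ it is then constant by the maximum principle. Hence $h\equiv a_j\geq0$, and no $L^1$ bound is needed.

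With these two repairs, your proof goes through, including the final claim for $\dim A\leq n-2$.
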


The following localization lemma is also used in the proof of~\cite[Theorem~4.5]{FMX}.
\begin{lemma}\label{lem:localization}
Let $M$ be a compact complex manifold of dimension $n$, and let $\eta$ be a smooth weakly positive $(n-1,n-1)$-form. Suppose that $\eta$ is strictly positive on $M\setminus A$, where $A$ is a closed subset. If $T$ is a positive $(1,1)$-current and $\langle T,\eta\rangle=0$, then $\Supp T\subset A$.
\end{lemma}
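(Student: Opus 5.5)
The argument works pointwise: positivity of the pairing, together with vanishing of its total, will force the trace measure of $T$ to vanish off $A$. Pick a smooth Hermitian metric $\omega_0$ on $M$ and let $\sigma_T=T\wedge\omega_0^{n-1}$ be the trace measure of $T$. This is a positive Radon measure, and $T$ vanishes on an open set exactly when $\sigma_T$ does there, because the coefficients of a positive current are dominated by its trace measure. It therefore suffices to show that $\sigma_T(K)=0$ for every compact $K\subset M\setminus A$. Equivalently, we show that $T=0$ on the open set $W=M\setminus A$, which gives $\Supp T\subset A$.

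First, the pairing $T\wedge\eta$ is a positive measure. Write $T$ locally as $\sum T_{j\bar k}\,\ii\, dz_j\wedge d\bar z_k$ with $(T_{j\bar k})$ a positive-semidefinite matrix of measures. For $\eta$ weakly positive of bidegree $(n-1,n-1)$, the product $T\wedge\eta$ is $\sum T_{j\bar k}\eta^{j\bar k}$ times the volume form. Here $(\eta^{j\bar k})$ is the positive-semidefinite Hermitian matrix dual to $\eta$: weak positivity in bidegree $(n-1,n-1)$ is the same as positivity, and it corresponds to a semipositive $(1,1)$-form under the Hodge-star-type isomorphism. The trace of a product of two semidefinite matrices is nonnegative, so $T\wedge\eta\geq0$. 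Since $M$ is compact, $\langle T,\eta\rangle=\int_M T\wedge\eta$. Vanishing of this integral then gives $T\wedge\eta=0$ as a measure on all of $M$, and in particular on $W$.

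Second, on a compact $K\subset W$ we use strict positivity of $\eta$. By compactness and continuity there is $c>0$ with $\eta\geq c\,\omega_0^{n-1}$ on a neighbourhood of $K$, meaning that $\eta-c\,\omega_0^{n-1}$ is weakly positive there. This holds because the dual matrix $(\eta^{j\bar k})$ is positive definite at each point of $W$, and its smallest eigenvalue relative to $\omega_0$ is continuous, hence bounded below on $K$. Applying the first step to $\eta-c\,\omega_0^{n-1}$ gives, near $K$,
\[
0\leq c\,T\wedge\omega_0^{n-1}\leq T\wedge\eta=0 .
\]
Hence $\sigma_T(K)=0$. Since $K$ was an arbitrary compact subset of $W$, we get $\sigma_T=0$ on $W$, so $T|_W=0$ and $\Supp T\subset A$.

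The only step needing care is the linear-algebra fact used twice: for a positive $(1,1)$-current $T$ and a weakly positive $(n-1,n-1)$-form, $T\wedge\eta\geq0$, and a uniform lower bound $\eta\geq c\,\omega_0^{n-1}$ holds locally wherever $\eta$ is strictly positive. I would verify both by diagonalizing $\eta$'s dual $(1,1)$-form at a point with respect to $\omega_0$. In bidegree $(n-1,n-1)$ every weakly positive form is the $(n-1)$-st power of a semipositive $(1,1)$-form, or at least is dual to one, and this reduces both facts to statements about Hermitian matrices.
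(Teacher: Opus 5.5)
Your argument is correct and is the paper's argument written out in more detail. You restrict to a compact subset of $M\setminus A$, bound $\eta$ below there by $c\,\omega_0^{n-1}$, and use positivity of $T\wedge\eta$ on all of $M$ to force the trace measure of $T$ to vanish.

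One side remark is inaccurate but unused. For $n\geq3$ it is false that every weakly positive $(n-1,n-1)$-form is the $(n-1)$-st power of a semipositive $(1,1)$-form; only the strictly positive ones are. The duality with a semipositive Hermitian matrix, which is all your proof needs, is correct.
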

\begin{proof}
Let $U\Subset M\setminus A$ be an arbitrary relatively compact open set. Since $\eta$ is strictly positive on a neighborhood of $\overline U$, it dominates there a positive multiple of a fixed strictly positive $(n-1,n-1)$-form. Because $T$ is positive and $\eta$ is weakly positive on all of $M$, the equality $\langle T,\eta\rangle=0$ implies that $T$ has no mass on $U$. Since $U$ is arbitrary, $T$ vanishes on $M\setminus A$, and hence $\Supp T\subset A$.
\end{proof}

\section{Exceptional divisors and Aeppli cohomology}
\label{sec:exceptional}

For a compact complex manifold $M$, set
\begin{align*}
H^{1,1}_{BC}(M)
&=\frac{\{\theta\in\A^{1,1}(M):\dd\theta=0\}}{\partial\dbar\A^{0,0}(M)},\\
H^{1,1}_A(M)
&=\frac{\{\theta\in\A^{1,1}(M):\partial\dbar\theta=0\}}{\partial\A^{0,1}(M)+\dbar\A^{1,0}(M)}.
\end{align*}
The identity induces a natural map
\[
\iota_M:H^{1,1}_{BC}(M)\longrightarrow H^{1,1}_A(M).
\]

The inclusion of smooth forms into currents induces natural isomorphisms in both Bott--Chern and Aeppli cohomology. We use these identifications without further comment.

Let
\[
D=\sum_j a_jD_j
\]
be a real divisor on $M$, with distinct prime divisors $D_j$ and coefficients $a_j\in\R$. We denote by
\[
[D]=\sum_j a_j[D_j]
\]
the associated integration current. Since $[D]$ is a $\dd$-closed real $(1,1)$-current, it determines classes
\[
[D]_{BC}\in H^{1,1}_{BC}(M), \qquad [D]_A\in H^{1,1}_A(M).
\]

For a compact irreducible curve $C\subset M$, with normalization map $\gamma:\widehat C\to M$, define
\[
D\cdot C:=\sum_j a_j\deg\gamma^*\OO_M(D_j).
\]
This definition also applies when $C$ is contained in the support of $D$.

\begin{lemma}\label{lem:aeppli-degrees}
Let $D$ be a real divisor on a compact complex manifold $M$. If
\[
[D]_A=0,
\]
then
\[
D\cdot C=0
\]
for every compact irreducible curve $C\subset M$.
\end{lemma}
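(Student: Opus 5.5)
The plan is to compute $D\cdot C$ by pairing a $\dd$-closed smooth representative of the divisor class with the curve. Then we show that this pairing only depends on the Aeppli class.

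\emph{Step 1: smooth representatives and degrees.} For each prime divisor $D_j$, choose a Hermitian metric $h_j$ on $\OO_M(D_j)$. Let $\theta_j=\frac{\ii}{2\pi}\Theta_{h_j}$ be its Chern curvature form, a smooth real $\dd$-closed $(1,1)$-form. The Poincaré--Lelong formula gives
\[
[D_j]=\theta_j+\tfrac{\ii}{\pi}\partial\dbar\log|s_j|_{h_j},
\]
with $s_j$ the canonical section. So $\theta:=\sum_j a_j\theta_j$ represents $[D]_{BC}$, and hence also $[D]_A$. For the normalization $\gamma:\widehat C\to M$ of a compact irreducible curve, $\gamma^*\theta_j$ is the curvature form of the pulled-back metric on $\gamma^*\OO_M(D_j)$. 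Hence
\[
\deg\gamma^*\OO_M(D_j)=\int_{\widehat C}\gamma^*\theta_j,
\qquad
D\cdot C=\int_{\widehat C}\gamma^*\theta=\langle[C],\theta\rangle.
\]
Here $[C]=\gamma_*[\widehat C]$ is the integration current of $C$. This formula needs no hypothesis on whether $C\subset\Supp D$, since it only uses the smooth form $\theta$.

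\emph{Step 2: Aeppli triviality.} By hypothesis and the identification of smooth and current Aeppli cohomology, $[D]_A=0$ gives $\theta=\partial\alpha+\dbar\beta$ with smooth $\alpha\in\A^{0,1}(M)$ and $\beta\in\A^{1,0}(M)$. Then
\[
D\cdot C=\int_{\widehat C}\bigl(\partial\gamma^*\alpha+\dbar\gamma^*\beta\bigr).
\]
On the compact Riemann surface $\widehat C$, the form $\gamma^*\alpha$ is of type $(0,1)$, so $\dbar\gamma^*\alpha=0$ for degree reasons. Thus $\partial\gamma^*\alpha=\dd\gamma^*\alpha$, and similarly $\dbar\gamma^*\beta=\dd\gamma^*\beta$. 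Stokes' theorem on the compact manifold $\widehat C$ then gives $D\cdot C=0$. Equivalently, the current $[C]$ is $\dd$-closed of bidimension $(1,1)$, hence both $\partial$- and $\dbar$-closed, and so it annihilates $\partial\A^{0,1}+\dbar\A^{1,0}$.

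\emph{Main obstacle.} The main point needing care is Step 2's use of the statement "the inclusion of smooth forms into currents induces isomorphisms". This is what lets us take $\alpha,\beta$ smooth, or at least lets us replace the current representative $[D]$ by the smooth $\theta$ without leaving the class. If we were handed only current primitives, pairing them with $[C]$ would be ill-defined. So I would first pass to the smooth representative $\theta$ as above. Its Aeppli class equals $[D]_A=0$, and by the isomorphism it is $\partial\alpha+\dbar\beta$ with smooth $\alpha,\beta$. The remaining points are routine: the pullback of smooth forms through the normalization, and the equality of $\deg$ with the integral of curvature on a compact Riemann surface.
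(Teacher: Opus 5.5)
Your proposal is correct and follows essentially the same route as the paper. You use Poincar\'e--Lelong to replace $[D]$ by the smooth Chern form $\theta=\sum_j a_j\theta_j$ in the same Aeppli class, obtain smooth primitives $\alpha,\beta$ via the smooth/current isomorphism, pull back to the normalization $\widehat C$ where $\partial\gamma^*\alpha+\dbar\gamma^*\beta=\dd(\gamma^*\alpha+\gamma^*\beta)$ for bidegree reasons, and conclude with Chern--Weil and Stokes.
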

\begin{proof}
Write $D=\sum_j a_jD_j$. Choose a smooth Hermitian metric $h_j$ on $\OO_M(D_j)$ and let $s_j$ be its canonical section. Set
\[
\theta_j=c_1(\OO_M(D_j),h_j),\qquad \theta_D=\sum_j a_j\theta_j.
\]
With the usual normalization of the first Chern form, the Poincar\'e--Lelong formula gives
\[
[D]-\theta_D=\frac{\ii}{2\pi}\partial\dbar\left(\sum_j a_j\log|s_j|_{h_j}^2\right).
\]
Consequently, $[D]$ and the smooth Chern form $\theta_D$ determine the same Aeppli class. Hence $[D]_A=0$ implies $[\theta_D]_A=0$. Thus there exist smooth forms $\alpha\in\A^{0,1}(M)$ and $\beta\in\A^{1,0}(M)$ such that
\[
\theta_D=\partial\alpha+\dbar\beta.
\]
Let $\gamma:\widehat C\to M$ be the normalization of a compact irreducible curve. Then
\[
\gamma^*\theta_D=\partial(\gamma^*\alpha)+\dbar(\gamma^*\beta)=\dd\bigl(\gamma^*\alpha+\gamma^*\beta\bigr).
\]
The last equality holds because the omitted terms have bidegrees $(0,2)$ and $(2,0)$ on the Riemann surface $\widehat C$. Chern--Weil theory and Stokes' theorem now give
\[
D\cdot C=\int_{\widehat C}\gamma^*\theta_D=0.
\]
\end{proof}

\section{Relative divisors and the negativity lemma}
\label{sec:negativity}

\begin{definition}
Let $f:Y\to Z$ be a proper holomorphic map and let $D$ be a real Cartier divisor on $Y$.
\begin{enumerate}
\item A compact irreducible curve $C\subset Y$ is \emph{contracted by $f$} if $f(C)$ is a point;
\item the divisor $D$ is \emph{$f$-nef} if
\[
D\cdot C\geq0
\]
for every compact irreducible curve contracted by $f$;
\item we write
\[
D\equiv_f0
\]
if
\[
D\cdot C=0
\]
for every compact irreducible curve contracted by $f$;
\item a real divisor on $Y$ is \emph{$f$-exceptional} if every prime divisor $E$ appearing in its support satisfies
\[
\operatorname{codim}_Z f(E)\geq2.
\]
\end{enumerate}
\end{definition}

\begin{theorem}[Analytic negativity lemma]\label{thm:negativity}
Let $f:Y\to Z$ be a projective bimeromorphic morphism of normal complex spaces, and let $D$ be an $f$-exceptional real Cartier divisor. If $D$ is $f$-nef, then $D\leq0$. Consequently, $D\equiv_f0$ implies $D=0$.
\end{theorem}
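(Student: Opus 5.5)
The plan is to reduce to the algebraic negativity lemma by slicing $Z$ down to a surface, where the Hodge index theorem (Grauert's criterion) finishes the argument. The statement is local on $Z$, so we may replace $Z$ by a small Stein open neighborhood of a point $z\in f(\Supp D)$. Since $f$ is projective over such a neighborhood, $Y$ embeds into $Z\times\mathbb P^N$. Write $D=D^+-D^-$, with $D^\pm\geq0$ having no common components. We argue by contradiction: suppose $D^+\neq0$, and let $E$ be a component of $D^+$.

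\textbf{Reduction to surfaces.} If $\dim Z=2$, cut $Y$ down differently as explained below. In general, choose $n-2$ general hyperplane sections $H_1,\ldots,H_{n-2}$ of $Z$ near a general point $z\in f(E)$, pulled back to $Y$, together with general members of a relatively very ample linear system on $Y$ if $\dim f(E)$ is small. The goal is to obtain a normal surface $S\subset Y$ (after normalization, using Bertini/Seidenberg-type arguments valid for Stein bases) with the following properties. First, $f|_S:S\to f(S)$ is a projective bimeromorphic morphism onto a normal surface germ. Second, $E\cap S$ contains a curve contracted by $f$. Third, $D|_S$ is still $f|_S$-nef and $f|_S$-exceptional.

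The precise recipe is as follows. Set $d=\dim f(E)\leq n-2$. Cut $Z$ by $d$ general hyperplanes through a general point of $f(E)$, so that $f(E)$ becomes finitely many points. Then cut $Y$ by $n-2-d$ general relatively ample divisors, so that the result is a surface. Nefness restricts because every curve in $S$ contracted by $f|_S$ is contracted by $f$, and degrees are computed via the normalization map exactly as in the definition of $D\cdot C$. Exceptionality becomes the statement that the divisors of $S$ lying in $\Supp D$ map to points.

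\textbf{Surface case.} Now $g:S\to W$ is a bimeromorphic map of a normal surface onto a normal surface germ, and $D_S=\sum b_iC_i$ is supported on the compact exceptional curves $C_i$ over the point $w$. By Mumford/Grauert, the intersection matrix $(C_i\cdot C_j)$ is negative definite (after passing to a resolution of $S$ if necessary, using Mumford's rational intersection theory on normal surfaces). Write $D_S=P-N$ with $P,N\geq0$ having no common components, and suppose $P\neq0$. Then
\[
P^2=P\cdot D_S+P\cdot N\geq0,
\]
since $P\cdot D_S\geq0$ by nefness and $P\cdot N\geq0$ because $P$ and $N$ share no components. This contradicts negative definiteness, since $P^2<0$. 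Hence $P=0$, so the coefficient of the component of $E\cap S$ is $\leq0$, contradicting $E\subset\Supp D^+$.

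The final clause follows by applying the first part to both $D$ and $-D$.

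\textbf{Main obstacle.} The hardest step is the slicing: ensuring in the analytic, only relatively projective setting that the general sections are normal surfaces, that each prime component of $D$ meeting the slice contributes a curve with the same coefficient, and that the nef condition survives. I would first try working with $Y$ replaced by a resolution $Y'\to Y$. This is harmless because pullbacks preserve nefness and exceptionality, and coefficients of strict transforms are unchanged. Working on $Y'$ makes Bertini for smooth manifolds available, and negative definiteness can then be applied to smooth surfaces via Grauert's criterion directly.
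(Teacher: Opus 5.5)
Your route differs from the paper's. The paper does no geometry here: it invokes BCHM, Lemma~3.6.2(2), in the analytic form noted by Fujino, with $L=0$, $M=D$, $G=D^-$, $E=-D^+$. You instead re-prove the negativity lemma by the classical slicing-plus-negative-definiteness argument. That is a legitimate alternative, but as written one of your three slice properties is false. The restriction $D|_S$ need not be $f|_S$-exceptional, i.e.\ the components of $D_S$ need not all map to points. Exceptionality only gives $\dim f(F)\le n-2$ for each component $F$ of $D$, and your cuts do not preserve this. Suppose $\dim f(F)=d+1\le n-2$ and $f(F)\supset f(E)$; for instance $n=3$, $f(E)=\{z\}$ and $f(F)$ a curve through $z$. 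After the $d$ cuts of $Z$, $f(F)\cap Z'$ is a curve through $z$. The fibres of $F\cap Y'$ over it have dimension $n-2-d$, so each meets the intersection of your $n-2-d$ relatively ample divisors. Hence $F\cap S$ is a curve mapping \emph{onto} a curve through $w$.

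If such an $F$ lies in $D^+$, the surface step fails, and nothing in your choice of $E$ excludes this. Either $P$ contains a non-compact, non-contracted curve, to which negative definiteness does not apply. Or, if $P$ is only the contracted part, the term $P\cdot(\text{positive non-contracted part})\ge0$ enters with the wrong sign, and $P^2\ge0$ no longer follows.

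The repair is to choose $E$ among the components of $D^+$ with $\dim f(E)$ \emph{maximal}, and $z$ general in $f(E)$. Then every component $F$ of $D^+$ has $\dim f(F)\le d$, so $f(F)\cap Z'$ is finite and $F\cap S$ consists of curves contracted by $g$. The non-contracted curves in $D_S$ then all come from $D^-$ and have negative coefficients. In the surface step, write $D_S=P-N$ near $g^{-1}(w)$, with $P\ge0$ supported on curves over $w$ and $N\ge0$ \emph{arbitrary}, sharing no components with $P$. Then:
\begin{itemize}
\item $P\cdot D_S\ge0$ by nefness;
\item $P\cdot N\ge0$, by Mumford's pairing of a compactly supported divisor with an effective divisor without common components;
\item $P^2<0$ if $P\ne0$,
\end{itemize}
which gives the contradiction.

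Two smaller corrections. First, $f(S)$ is in general not normal; this is harmless, since negative definiteness only needs a proper bimeromorphic map onto a two-dimensional germ. Second, for a \emph{fixed} $z$, the hyperplanes through $z$ pull back to a system with base locus $f^{-1}(z)$, which is exactly where you need Bertini. So the point and the hyperplanes must be chosen jointly general, or you should cut by general hyperplanes and localize at a point of $f(E)\cap Z'$.

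With these changes your argument is a self-contained proof. The price is relying on analytic Bertini theorems over Stein bases, which the paper avoids by citation.
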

\begin{proof}
In the algebraic setting, this is BCHM, Lemma~3.6.2 (``Negativity of contraction'')~\cite[Lemma~3.6.2]{BCHM}. Fujino explains that BCHM, Lemma~3.6.2, remains valid for projective morphisms of normal complex analytic spaces; see~\cite[Section~11, immediately after Definition~11.1]{Fujino}. Since the statement used here is local over $Z$, one may shrink to suitable Stein neighborhoods and apply that analytic version.

To deduce the stated form from BCHM, Lemma~3.6.2(2), write
\[
D=D^+-D^-,
\]
where $D^+\geq0$ and $D^-\geq0$ have no common irreducible components. Because $D$ is $f$-exceptional, both $D^+$ and $D^-$ are $f$-exceptional.

BCHM, Lemma~3.6.2(2), in the analytic setting just cited, gives the following statement. Suppose
\[
f^*L\equiv_f M+G+E,
\]
where $L$ is a real Cartier divisor on $Z$, $M$ is an $f$-nef real Cartier divisor on $Y$, $G\geq0$, $E$ is $f$-exceptional, and $G$ and $E$ have no common components. Then
\[
E\geq0.
\]

Apply this with
\[
L=0,\qquad M=D,\qquad G=D^-,\qquad E=-D^+.
\]
Indeed,
\[
f^*L=0\equiv_f D+D^- -D^+=M+G+E.
\]
The divisor $M=D$ is $f$-nef by assumption, $G=D^-\geq0$, $E=-D^+$ is $f$-exceptional, and $G$ and $E$ have no common components. Hence the negativity lemma gives
\[
-D^+=E\geq0.
\]
Since also $D^+\geq0$, it follows that
\[
D^+=0.
\]
Therefore,
\[
D=-D^-\leq0.
\]

Finally, assume
\[
D\equiv_f0.
\]
Then both $D$ and $-D$ are $f$-nef and $f$-exceptional real Cartier divisors. Applying the first part to $D$ and to $-D$ gives
\[
D\leq0, \qquad -D\leq0.
\]
Consequently,
\[
D=0.
\]
\end{proof}

\section{Exceptional Aeppli classes}
\label{sec:exceptional-aeppli}

\begin{proposition}\label{prop:exceptional-independence}
Let $f:M\to Z$ be a projective bimeromorphic morphism from a compact complex manifold of dimension $n$ onto a normal compact complex space. Let $E_1,\ldots,E_r$ be distinct $f$-exceptional prime divisors. Then
\[
[E_1]_A,\ldots,[E_r]_A
\]
are linearly independent over $\R$.
\end{proposition}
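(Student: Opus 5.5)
We argue by contradiction via a vanishing linear relation. Suppose $a_1,\ldots,a_r\in\R$ satisfy $\sum_j a_j[E_j]_A=0$, and set
\[
D=\sum_{j=1}^r a_jE_j .
\]
Since $M$ is a manifold, each $E_j$ is Cartier, so $D$ is a real Cartier divisor on $M$. By linearity of the map $D\mapsto[D]_A$ (the current $[D]$ is $\sum_j a_j[E_j]$), we have $[D]_A=0$. We must show that all $a_j=0$.

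First, Lemma~\ref{lem:aeppli-degrees} gives $D\cdot C=0$ for every compact irreducible curve $C\subset M$. In particular this holds for every curve contracted by $f$, so $D\equiv_f0$. Note that all curves in $M$ are compact here, since $M$ is compact, so there is no subtlety about which curves are tested.

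Next, every prime component of $D$ is among the $E_j$, which are $f$-exceptional. Hence $D$ is an $f$-exceptional real Cartier divisor. To apply Theorem~\ref{thm:negativity} we need $f\colon M\to Z$ to be a projective bimeromorphic morphism of normal complex spaces; $M$ is smooth, hence normal, and $Z$ is normal by hypothesis. The theorem then yields $D=0$ as a divisor. Since the $E_j$ are distinct prime divisors, this means $a_j=0$ for all $j$, which is the desired linear independence.

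The real work has been front-loaded into Lemma~\ref{lem:aeppli-degrees} and Theorem~\ref{thm:negativity}. The only point to check carefully is the consistency of definitions. The intersection number $D\cdot C$ from Section~\ref{sec:exceptional}, defined via degrees on the normalization of $C$, must be the same quantity used in the definition of $D\equiv_f0$ and hence in the negativity lemma. This holds because both are $\sum_j a_j\deg\gamma^*\OO_M(E_j)$ for the normalization map $\gamma\colon\widehat C\to M$. I expect no genuine obstacle beyond this bookkeeping.
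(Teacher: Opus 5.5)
Your proposal is correct and follows the paper's proof step for step. From a relation $\sum_j a_j[E_j]_A=0$ you form $D=\sum_j a_jE_j$, use Lemma~\ref{lem:aeppli-degrees} to get $D\cdot C=0$ for every compact curve (hence $D\equiv_f0$), and apply Theorem~\ref{thm:negativity} to the $f$-exceptional divisor $D$ to conclude $D=0$, while also checking the hypotheses the paper leaves implicit: $D$ is Cartier and $M$ is normal because $M$ is smooth, and the two definitions of $D\cdot C$ agree.
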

\begin{proof}
Suppose
\[
\sum_{j=1}^r a_j[E_j]_A=0, \qquad a_j\in\R,
\]
and set
\[
D:=\sum_{j=1}^r a_jE_j.
\]
The assumed relation is equivalent to
\[
[D]_A=0.
\]
By Lemma~\ref{lem:aeppli-degrees},
\[
D\cdot C=0
\]
for every compact irreducible curve $C\subset M$. In particular, this holds for every curve contracted by $f$, so
\[
D\equiv_f0.
\]
The divisor $D$ is $f$-exceptional because every component of its support is one of the $E_j$. Theorem~\ref{thm:negativity} therefore gives
\[
D=0.
\]
Since the $E_j$ are distinct prime divisors, all coefficients $a_j$ are zero.
\end{proof}

\begin{corollary}
Under the assumptions of Proposition~\ref{prop:exceptional-independence}, the natural map
\[
\iota_M:H^{1,1}_{BC}(M,\R)\longrightarrow H^{1,1}_{A}(M,\R)
\]
is injective on
\[
\Span_{\R}\{[E_1]_{BC},\ldots,[E_r]_{BC}\}.
\]
In particular, no nonzero nonnegative linear combination of the currents $[E_j]$ is the $(1,1)$-component of a boundary.
\end{corollary}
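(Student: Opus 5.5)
The plan is to deduce both assertions directly from Proposition~\ref{prop:exceptional-independence}, using only the linear-algebra meaning of injectivity on a subspace and the definition of being the $(1,1)$-component of a boundary.

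\textbf{Restricted injectivity.} Let $\xi$ be an element of $\Span_{\R}\{[E_1]_{BC},\ldots,[E_r]_{BC}\}$ with $\iota_M(\xi)=0$. Write
\[
\xi=\sum_j a_j[E_j]_{BC}=\Bigl[\sum_j a_j[E_j]\Bigr]_{BC}.
\]
The map $\iota_M$ is induced by the identity on $\dd$-closed representatives, and the identifications of smooth and current cohomology are compatible with it. Hence
\[
\iota_M(\xi)=\sum_j a_j[E_j]_A .
\]
Since this vanishes, Proposition~\ref{prop:exceptional-independence} gives $a_j=0$ for all $j$, and so $\xi=0$. The one point needing care is that the Bott--Chern class of an integration current maps to its Aeppli class under $\iota_M$. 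This holds because $\partial\dbar\D'^{0,0}\subset\partial\D'^{0,1}+\dbar\D'^{1,0}$, so the current-level map is well defined and agrees with $\iota_M$ under the stated isomorphisms.

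\textbf{Positive combinations.} Suppose $T=\sum_j a_j[E_j]$ with $a_j\ge0$ and $T=\partial U+\dbar V$ for currents $U\in\D'^{0,1}(M)$, $V\in\D'^{1,0}(M)$. The current $T$ is $\dd$-closed, so it is in particular $\partial\dbar$-closed. Its Aeppli class is by definition the class of $T$ modulo $\partial\D'^{0,1}+\dbar\D'^{1,0}$, which is zero. Therefore $\sum_j a_j[E_j]_A=0$, and Proposition~\ref{prop:exceptional-independence} forces all $a_j=0$, that is, $T=0$.

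No real obstacle is expected. The only step requiring a remark is the compatibility of current-level Aeppli classes with the smooth ones. This is exactly the identification already assumed in Section~\ref{sec:exceptional}.
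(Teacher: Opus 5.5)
Your proposal is correct and follows the paper's proof essentially verbatim. A class in the span with vanishing image gives the relation $\sum_j a_j[E_j]_A=0$, which Proposition~\ref{prop:exceptional-independence} kills, and the final assertion is the observation that being the $(1,1)$-component of a boundary is exactly Aeppli-exactness. Your extra remark that $\partial\dbar\D'^{0,0}\subset\partial\D'^{0,1}+\dbar\D'^{1,0}$, so that the current-level map is compatible with $\iota_M$, is a harmless detail the paper leaves implicit.
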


\begin{proof}
A class in the indicated span that maps to zero in Aeppli cohomology produces a relation
\[
\sum_j a_j[E_j]_A=0.
\]
Proposition~\ref{prop:exceptional-independence} forces every $a_j$ to vanish. The final assertion follows because being the $(1,1)$-component of a boundary is exactly Aeppli-exactness for a real $(1,1)$-current.
\end{proof}

\section{Strict positivity detects nonexceptional divisors}
\label{sec:positivity}

For a non-normal target, divisors lying over the singular locus need not be exceptional. The following lemma deals with the components whose images have codimension one.

\begin{lemma}\label{lem:mass}
Let $X$ be a compact reduced complex space of pure dimension $n$, let $\Omega$ be a smooth strictly positive $(n-1,n-1)$-form on $X$, and let $\pi:M\to X$ be a proper holomorphic map from a compact complex $n$-manifold. Then for every prime divisor $E\subset M$,
\[
\int_E\pi^*\Omega\geq0.
\]
More precisely,
\[
\int_E \pi^*\Omega>0\quad\text{if }\dim\pi(E)=n-1,
\]
whereas
\[
\int_E \pi^*\Omega=0\quad\text{if }\dim\pi(E)\leq n-2.
\]
\end{lemma}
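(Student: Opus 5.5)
Set $W:=\pi(E)$; this is an irreducible compact analytic subset of $X$ by Remmert's proper mapping theorem, and $\dim W\le n-1$ because $\dim E=n-1$. The plan is to compute $\int_E\pi^*\Omega$ on a smooth model of $E$, factor the map through $W$, and then treat the cases $\dim W=n-1$ and $\dim W\le n-2$ separately. The integral is computed on a desingularization $\nu:\widehat E\to E$: we have $\int_E\pi^*\Omega=\int_{\widehat E}\phi^*\Omega$ with $\phi:=\pi\circ\nu:\widehat E\to X$, because $E_{\sing}$ has measure zero and $\nu$ is biholomorphic over $E_{\reg}$. The map $\phi$ factors through $W$, so in a local embedding $j:U\hookrightarrow V\subset\C^N$ the form $\phi^*\Omega$ is the pullback of any representative $\Omega_V$ by the holomorphic map $j\circ\phi$ into $V$. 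The representative may be chosen strictly positive by the definition of strict positivity on $X$.

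Nonnegativity comes first. Pullback by a holomorphic map preserves weak positivity of $(p,p)$-forms: at a point $x$, the restriction of $\phi^*\Omega_V$ to a complex $(n-1)$-plane $P\subset T_x\widehat E$ is $\Lambda^{n-1}(d\phi_x)$ applied to $P$. This vanishes if $d\phi_x|_P$ is not injective. Otherwise it is the restriction of $\Omega_V$ to the complex $(n-1)$-plane $d\phi_x(P)$, which is positive. Hence $\phi^*\Omega\ge0$ pointwise as a top form on $\widehat E$, so the integral is $\geq0$. The same computation yields the vanishing statement when $\dim W\le n-2$. In that case the rank of $d\phi$ is everywhere at most $\dim W<n-1$: the generic rank of $\phi$ equals $\dim W$, and rank is lower semicontinuous, which gives an upper bound $\le\dim W$ everywhere. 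Therefore the top-degree form $\phi^*\Omega$ vanishes identically and $\int_E\pi^*\Omega=0$. An alternative check is that $\phi$ factors through a resolution of $W$, and a $2(n-1)$-form pulled back from a space of real dimension $<2(n-1)$ is zero.

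For $\dim W=n-1$, $\phi:\widehat E\to W$ is a surjective proper map between spaces of equal dimension. Hence it is generically finite, and by Sard/generic rank there is a nonempty open set $O\subset\widehat E$ on which $d\phi$ is injective and $\phi(O)\subset W_{\reg}$. On $O$, the plane $d\phi_x(T_x\widehat E)$ is a nonzero complex $(n-1)$-plane. The strictly positive $\Omega_V$ restricts to it as a positive volume form, so $\phi^*\Omega$ is strictly positive on $O$. Combined with the pointwise nonnegativity everywhere, this gives $\int_E\pi^*\Omega>0$.

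The main obstacle I expect is the bookkeeping around the definition of forms on singular spaces. One must ensure that $\phi^*\Omega$ does not depend on the chosen representative $\Omega_V$, since representatives agree only on $U_{\reg}$ while $W$ may lie in $X_{\sing}$. The strategy is to use that the paper's pullback by holomorphic maps is well defined. Locally, one then computes with a single strictly positive representative and a holomorphic lift into $V$, so that all the positivity statements above are checked for forms on the manifold $V$ restricted to tangent images.
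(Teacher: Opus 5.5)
Your proposal is correct and follows essentially the same route as the paper. You pass to a resolution $\widehat E\to E$, use pointwise weak positivity of the pullback for nonnegativity, and use that the generic rank equals $\dim\pi(E)$: this gives a nonempty open set where a strictly positive local representative pulls back to a positive volume form when $\dim\pi(E)=n-1$, and vanishing when $\dim\pi(E)\le n-2$. The only cosmetic difference is that you bound the rank everywhere via lower semicontinuity, whereas the paper shows vanishing on a dense open set and concludes by smoothness of $g^*\Omega$.
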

\begin{proof}
The pullback $\pi^*\Omega$ is weakly positive, so its integral over $E$ is nonnegative. Set $V:=\pi(E)$ with the reduced structure and choose a resolution $\rho:\widehat E\to E$. Let
\[
g:\widehat E\longrightarrow V\hookrightarrow X
\]
be the composite map. Then
\[
\int_E\pi^*\Omega=\int_{\widehat E}g^*\Omega.
\]
If $\dim V=n-1$, then $g$ is generically finite onto $V$. On a nonempty open subset of $\widehat E$, its differential has rank $n-1$. A local ambient strictly positive representative of $\Omega$ therefore pulls back there to a strictly positive volume form. Elsewhere the pullback is nonnegative, so the integral is strictly positive.

If $\dim V\leq n-2$, then the generic rank of $g$ is at most $n-2$. The pullback of an $(n-1,n-1)$-form therefore vanishes on a dense open subset of $\widehat E$. Since $g^*\Omega$ is smooth, it vanishes everywhere, and the integral is zero.
\end{proof}

\section{Proof of the resolution theorem}\label{sec:resolution}

\begin{proof}[Proof of Theorem~\ref{thm:main}]
Let $\Omega$ be a balanced form on $X$. Assume first that $X$ is irreducible. Let
\[
\nu:X^\nu\longrightarrow X
\]
be the normalization. This is a finite bimeromorphic morphism, and $X^\nu$ is a normal compact complex space. Resolution of singularities in the analytic category provides a resolution
\[
M\xrightarrow{\mu}X^\nu\xrightarrow{\nu}X, \qquad \pi=\nu\circ\mu,
\]
where $\mu$ is projective and is an isomorphism over $(X^\nu)_{\reg}$. One may choose $\mu$ to be a finite composition of blowups; compactness makes the sequence finite. We use the analytic resolution theorem in this form; see~\cite{BierstoneMilman}. Since $\nu$ is an isomorphism over $X_{\reg}$, so is $\pi$.

By Theorem~\ref{thm:michelsohn}, to prove that $M$ is balanced, it is enough to show that every positive real $(1,1)$-current that is the $(1,1)$-component of a boundary vanishes. Since the current is real, we may write
\[
T\geq0, \qquad T=\partial\overline S+\dbar S, \qquad S\in\D'^{1,0}(M).
\]
Because $\pi^*\Omega$ is $\dd$-closed,
\[
\langle T,\pi^*\Omega\rangle=0.
\]
Moreover, $\pi^*\Omega$ is weakly positive everywhere and strictly positive on $\pi^{-1}(X_{\reg})$. Hence Lemma~\ref{lem:localization} gives
\[
\Supp T\subset \pi^{-1}(X_{\sing}).
\]
Since
\[
\partial\dbar T=0,
\]
Theorem~\ref{thm:support} gives
\begin{equation}\label{eq:T-divisor}
T=\sum_{j=1}^r a_j[E_j], \qquad a_j\geq0,
\end{equation}
where $E_1,\ldots,E_r$ are the divisorial irreducible components of $\pi^{-1}(X_{\sing})$. Set
\[
D:=\sum_{j=1}^r a_jE_j.
\]
Then $T=[D]$. Since $T$ is the $(1,1)$-component of a boundary,
\[
[D]_A=0.
\]

If $X$ is normal, then every component occurring in $D$ satisfies
\[
\dim\pi(E_j)\leq n-2.
\]
Thus each $E_j$ is $\pi$-exceptional, and Proposition~\ref{prop:exceptional-independence} yields $D=0$.

If $X$ is non-normal, $X_{\sing}$ may have codimension-one components. Pairing~\eqref{eq:T-divisor} with $\pi^*\Omega$ yields
\[
0=\sum_{j=1}^r a_j\int_{E_j}\pi^*\Omega.
\]
Every summand is nonnegative. By Lemma~\ref{lem:mass}, if $\dim\pi(E_j)=n-1$, then its integral is strictly positive, and hence $a_j=0$. Therefore every component occurring in $D$ satisfies
\[
\dim\pi(E_j)\leq n-2.
\]
The normalization map $\nu$ is finite and preserves the dimension of analytic subspaces. Consequently,
\[
\dim\mu(E_j)=\dim\nu(\mu(E_j))=\dim\pi(E_j)\leq n-2
\]
for every component of $D$. Thus $D$ is $\mu$-exceptional. Applying Proposition~\ref{prop:exceptional-independence} to $\mu:M\to X^\nu$, we obtain $D=0$.

For a general reduced $X$ of pure dimension $n$, let $X_1,\ldots,X_s$ be its irreducible components, each equipped with its reduced structure. The restriction of $\Omega$ is a balanced form on every $X_i$. By the preceding argument, each $X_i$ has a balanced resolution $M_i$. The disjoint union $\coprod_i M_i\to X$ is therefore a balanced resolution of $X$.

Finally, let $M'\to X$ be any resolution. Resolving the closure of the graph of the induced bimeromorphic map between $M'$ and the balanced resolution constructed above gives a common smooth modification. Balancedness is invariant under modifications of compact complex manifolds by~\cite{AB-modifications}; applying this result to the two projections shows that $M'$ is balanced. This proves the theorem.
\end{proof}

\section{Balanced spaces form a proper subclass of \texorpdfstring{$\Bclass$}{B}}
\label{sec:strict-inclusion}

Theorem~\ref{thm:main} proves that every compact reduced balanced space of pure dimension at least two belongs to $\Bclass$. The converse is false. A classical construction of Grauert~\cite[\S8(d), pp.~365--366]{Grauert} gives a counterexample.

\begin{proposition}
Among compact reduced complex spaces of pure dimension at least two, one has the strict inclusion
\[
\{\text{compact balanced spaces}\}\subsetneq\Bclass.
\]
\end{proposition}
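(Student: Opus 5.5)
The inclusion itself is Theorem~\ref{thm:main}, so only strictness needs an argument. For that we exhibit a compact normal surface $Y$ that lies in $\Bclass$ but admits no smooth, $\dd$-closed, strictly positive $(1,1)$-form. The plan follows Grauert's construction. Take a smooth projective ruled surface $p:\widetilde X\to B$ over a curve $B$ of genus $g\geq 1$, together with a section $C\subset\widetilde X$ with $C^2<0$. By Grauert's criterion, $C$ can be contracted to a normal point, giving $\pi:\widetilde X\to Y$ with $Y$ a normal compact complex surface. Since $\widetilde X$ is projective, it is Kähler and hence balanced, so $\pi$ is a balanced resolution and $Y\in\Bclass$.

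Next we observe that in dimension two the balanced condition on a space reads: a smooth, $\dd$-closed, strictly positive $(1,1)$-form on $Y$. This is exactly a Kähler form on the complex space $Y$ in the sense of local ambient representatives. So it suffices to show that $Y$ is not Kähler. Note that an arbitrary section $C$ with $C^2<0$ will not do, because the contraction might even be projective. The choice of $\widetilde X$ and $C$ must therefore be Grauert's specific one. Concretely, $\widetilde X=\mathbb P(\OO_B\oplus L)$ with $\deg L<0$ chosen so that $C$ is the negative section, and the crucial feature is that no complement curve of the right numerical type exists. Grauert's choice makes the normal bundle of $C$ a negative line bundle whose dual is not algebraically equivalent to an effective divisor supported away from $C$.

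For the non-Kähler step we argue by contradiction. Suppose $\omega$ is a Kähler form on $Y$. Then $\pi^*\omega$ is a smooth closed semipositive $(1,1)$-form on $\widetilde X$. It vanishes along directions tangent to $C$, since $\pi(C)$ is a point and Lemma~\ref{lem:mass} gives $\int_C\pi^*\omega=0$, and it is strictly positive off $C$. Its class $\alpha=[\pi^*\omega]\in H^{1,1}(\widetilde X,\R)$ then satisfies $\alpha\cdot C=0$, $\alpha^2>0$, and $\alpha\cdot F>0$ for a fibre $F$. On a ruled surface $H^{1,1}$ is spanned by $[C]$ and $[F]$ (Néron--Severi equals $H^{1,1}$ since $p_g=0$). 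Hence $\alpha=a[C]+b[F]$ with $b=-aC^2$. Because $Y$ is Kähler with a single rational singular point, $\alpha$ is a rational multiple of a big and nef class modulo approximation. This would make $Y$ Moishezon, and a Kähler Moishezon normal surface is projective. Projectivity of $Y$ would give an ample divisor on $Y$ pulling back to an effective curve in the class $\alpha$ disjoint from $C$. Grauert's barycenter argument rules this out: averaging such curves fibrewise along the ruling produces a second section $C'$ with $C'\cdot C=0$, contradicting his choice of $L$.

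The main obstacle is this last step: ruling out a Kähler form on $Y$ when only the class $\alpha$ is known. I would first try to avoid Moishezon theory by using currents directly. A $\dd$-closed positive $(1,1)$-current on $\widetilde X$ in the class $\alpha$ that is smooth and strictly positive off $C$ should, by Grauert's barycenter construction, force the existence of a holomorphic section of $\widetilde X$ disjoint from $C$. That would contradict the choice of the extension.
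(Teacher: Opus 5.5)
Your concrete choice $\widetilde X=\mathbb P(\OO_B\oplus L)$ does not give a counterexample. A split ruled surface has two \emph{disjoint} sections coming from the two summands. One is the negative section $C$. The other is a section $C'$ with $C'\cap C=\emptyset$, $C'\equiv C-C^2F$ and $C'^2=-C^2>0$, where $F$ is a fibre. The line bundle $\OO_{\widetilde X}(C')$ is trivialized by its canonical section on the neighbourhood $\widetilde X\setminus C'$ of $C$, so it descends to $Y$. It is ample there by Nakai--Moishezon, since $C'\cdot D>0$ for every irreducible curve $D\neq C$. Hence $Y$ is projective (a projective cone over $B$), and therefore K\"ahler and balanced.

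For the same reason, your barycenter step cannot contradict ``the choice of $L$'': $C'$ is itself a holomorphic section of $\widetilde X\setminus C\to B$. What Grauert's construction requires, and what the paper uses, is that $\widetilde X\setminus C$ be an affine bundle with \emph{no} holomorphic section. Concretely, $\widetilde X=\mathbb P(E)$ for a \emph{non-split} extension $0\to\mathcal M\to E\to\OO_B\to0$ with $\deg\mathcal M>0$ and class $0\neq\xi\in H^1(B,\mathcal M)$, and $C$ is the section defined by $\mathcal M\subset E$. This also shows that $g\geq1$ is not enough. On an elliptic curve $H^1(B,\mathcal M)=0$ whenever $\deg\mathcal M>0$, so every negative section has a disjoint section and the contraction is projective. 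The paper takes $B=R$ of genus $2$ and $\mathcal M=\OO_R(p)$.

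The non-K\"ahler step is also not established. The singular point $y_0$ is not rational: it is the contraction of a curve of genus $g\geq1$, so its geometric genus is at least $h^1(C,\OO_C)=g$. Projectivity criteria for K\"ahler Moishezon spaces with rational singularities therefore do not apply. The remark ``this would make $Y$ Moishezon'' adds nothing, since $Y$ is bimeromorphic to the projective surface $\widetilde X$ in any case.

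More fundamentally, the numerical information you extract, namely that $\alpha$ is a positive multiple of the nef and big rational class $[C]-C^2[F]$, holds for the contraction of \emph{every} negative section of a ruled surface, including Grauert's non-K\"ahler one. So at that point the K\"ahler hypothesis has not really been used, and nothing yet produces a line bundle on $Y$. The missing step is to use $\omega$ itself near $y_0$. You need to show that a line bundle $\mathcal L$ on $\widetilde X$ with $c_1(\mathcal L)=[C]-C^2[F]$ becomes holomorphically trivial on a neighbourhood of $C$ after twisting by a bundle pulled back from $B$. Then it descends to an ample bundle on $Y$ and the barycenter argument applies. For example, if $\omega$ has a smooth local $\partial\dbar$-potential near $y_0$, the $\partial\dbar$-lemma on $\widetilde X$ shows that $\mathcal L$ is unitary flat on a tubular neighbourhood of $C$, and this gives the required twist.

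Your current-theoretic fallback does not supply this step. The paper does not derive non-K\"ahlerness from non-projectivity at all: it proves non-projectivity by the barycenter argument and cites Moishezon's theorem that Grauert's $Y$ is not K\"ahler.
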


\begin{proof}
Let $R$ be a compact Riemann surface of genus $2$, and fix a point $p\in R$. Set
\[
F:=\OO_R(p).
\]
Then $\deg F=1$. By the Riemann--Roch theorem,
\[
h^0(R,F)-h^1(R,F)=\deg F+1-g(R)=0.
\]
Since $F$ has a nonzero holomorphic section, $h^0(R,F)>0$, and hence
\[
H^1(R,F)\neq0.
\]
Choose a nonzero class
\[
0\neq\xi\in H^1(R,F).
\]

Let $\{U_i\}$ be a finite open covering of $R$ over which $F$ is trivial, let $g_{ij}$ be the transition functions of $F$, and represent $\xi$ by a \v{C}ech cocycle $\{\xi_{ij}\}$. Glue the products $U_i\times\C$ by the affine transformations
\[
z_i=g_{ij}z_j+\xi_{ij}.
\]
This gives a holomorphic affine-line bundle
\[
q:A\longrightarrow R
\]
whose structure group is the group of complex affine transformations. Equivalently, $A$ is an $F$-torsor whose class is $\xi$.

The choice $\xi\neq0$ means precisely that $A$ has no holomorphic section. Indeed, a holomorphic section would be represented by holomorphic functions $s_i$ on $U_i$ satisfying
\[
s_i=g_{ij}s_j+\xi_{ij}.
\]
Thus the cocycle $\{\xi_{ij}\}$ would be a coboundary, contrary to the choice of $\xi$.

We now compactify every affine fiber $\C$ by adding its point at infinity. Since affine transformations extend to automorphisms of $\mathbb P^1$ fixing infinity, this produces a holomorphic $\mathbb P^1$-bundle
\[
\widetilde X\longrightarrow R
\]
together with a distinguished section
\[
C:=\widetilde X\setminus A\simeq R,
\]
the section at infinity.

The same construction may be described in terms of the extension associated with $\xi$,
\[
0\longrightarrow F\longrightarrow E\longrightarrow\OO_R\longrightarrow0.
\]
If $\mathbb P(E)$ denotes the bundle of lines in the fibers of $E$, then
\[
\widetilde X\simeq\mathbb P(E),
\]
and $C$ is the section corresponding to the line subbundle $F\subset E$. In particular, $\widetilde X$ is a smooth projective ruled surface. Moreover,
\[
N_{C/\widetilde X}\simeq F^{-1},
\]
and therefore
\[
C^2=\deg F^{-1}=-1.
\]

By Grauert's contraction criterion~\cite{Grauert}, the curve $C$ can be contracted to a point. Thus there is a proper bimeromorphic holomorphic map
\[
\pi:\widetilde X\longrightarrow Y
\]
onto a normal compact complex surface $Y$ such that
\[
\pi(C)=\{y_0\}
\]
and
\[
\pi:\widetilde X\setminus C
   \longrightarrow
   Y\setminus\{y_0\}
\]
is biholomorphic. In particular, $\pi$ is a resolution of the unique singular point $y_0$ of $Y$.

Following Grauert~\cite[\S8(d), pp.~365--366]{Grauert}, suppose for contradiction that $Y$ is projective. Choose a projective embedding of $Y$ and a general hyperplane section
\[
B\subset Y
\]
which does not contain $y_0$. Its inverse image
\[
\widetilde B:=\pi^{-1}(B)
\]
is a compact curve contained in
\[
\widetilde X\setminus C=A.
\]
The restriction
\[
q|_{\widetilde B}:\widetilde B\longrightarrow R
\]
is finite and surjective. Indeed, its image cannot be a point, since a positive-dimensional compact analytic subset contained in a fiber of
\[
A\longrightarrow R
\]
would be a compact positive-dimensional analytic subset of $\C$, which is impossible.

Let $m$ be the degree of $q|_{\widetilde B}$. Over a sufficiently small open subset $U_i\subset R$, the intersection of $\widetilde B$ with a fiber of $A$ consists, with multiplicities, of $m$ points
\[
z_{i,1}(x),\ldots,z_{i,m}(x).
\]
Their barycenter
\[
s_i(x):=\frac1m\sum_{\nu=1}^m z_{i,\nu}(x)
\]
depends holomorphically on $x$. On an overlap $U_i\cap U_j$, the affine transition rule gives
\[
z_{i,\nu}=g_{ij}z_{j,\nu}+\xi_{ij},
\]
and hence
\[
s_i=g_{ij}s_j+\xi_{ij}.
\]
Thus the collection $\{s_i\}$ defines a holomorphic section of the affine bundle
\[
A\longrightarrow R.
\]
This contradicts $\xi\neq0$. Therefore $Y$ is not projective.

Moishezon proved that $Y$ is not a K\"ahler analytic space; see~\cite{Moishezon}, and see also the discussion of Grauert's example in~\cite[Section~2, pp.~2989]{FilipTosatti}. Since $\dim_{\C}Y=2$, every balanced form on $Y$ is a smooth, $\dd$-closed, strictly positive $(1,1)$-form, hence a K\"ahler form in the sense of complex spaces. Thus $Y$ is not balanced.

On the other hand, $\widetilde X$ is projective, hence K\"ahler and therefore balanced. Since
\[
\pi:\widetilde X\longrightarrow Y
\]
is a resolution, it is a balanced resolution of $Y$. Hence
\[
Y\in\Bclass.
\]
Therefore
\[
Y\in
\Bclass\setminus
\{\text{compact balanced spaces}\},
\]
and consequently
\[
\{\text{compact balanced spaces}\}
\subsetneq
\Bclass.
\]
\end{proof}

\begin{remark}
A smooth non-K\"ahler Moishezon manifold is bimeromorphic to a projective manifold and is therefore balanced, by the invariance of balancedness under modifications~\cite{AB-modifications}. Such examples do not separate balanced manifolds from $\Bclass$.

The example above also shows that the modification theorem of Alessandrini--Bassanelli~\cite{AB-modifications} cannot be extended naively by allowing the target to be singular: $\widetilde X$ is projective, hence balanced, whereas its singular bimeromorphic image $Y$ is not balanced.
\end{remark}

\end{document}